\documentclass[11pt,reqno]{amsart}
\setlength{\voffset}{-.25in}
\usepackage{amssymb,latexsym}
\usepackage{graphicx}
\usepackage{url}		

\usepackage[utf8]{inputenc}
\usepackage[T1]{fontenc}
\usepackage{tabularx}
\usepackage{mathtools}
\usepackage{amsmath,amsfonts}
\usepackage{amsthm}

\textwidth=6.175in
\textheight=9.0in
\headheight=13pt
\calclayout

\newcommand\mymu[2][^n]{\prescript{#1\mkern-0.5mu}{}\mu_{#2}}

\makeatletter
\newcommand{\monthyear}[1]{%
  \def\@monthyear{\uppercase{#1}}}
\newcommand{\volnumber}[1]{%
  \def\@volnumber{\uppercase{#1}}}
\AtBeginDocument{%
\def\ps@plain{\ps@empty
  \def\@oddfoot{\@monthyear \hfil \thepage}%
  \def\@evenfoot{\thepage \hfil \@volnumber}}
\def\ps@firstpage{\ps@plain}
\def\ps@headings{\ps@empty
  \def\@evenhead{%
    \setTrue{runhead}%
    \def\thanks{\protect\thanks@warning}%
    \uppercase{}\hfil}%
  \def\@oddhead{%
    \setTrue{runhead}%
    \def\thanks{\protect\thanks@warning}%
    \hfill\uppercase{GENERALIZED SUMMATION}}%
  \let\@mkboth\markboth
  \def\@evenfoot{%
    \thepage \hfil \@volnumber}%
  \def\@oddfoot{%
    \@monthyear \hfil \thepage}%
  }%
\footskip=25pt
\pagestyle{headings}%
}
\makeatother

\theoremstyle{plain}
\numberwithin{equation}{section}
\newtheorem{thm}{Theorem}[section]

\newtheorem{lemma}[thm]{Lemma}

\newtheorem{definition}[thm]{Definition}

\begin{document}
\monthyear{}
\volnumber{}
\setcounter{page}{1}

\title{Generalized Nested Summation of Powers of Natural Numbers}
\author{Patibandla Chanakya*}
\author{Putla Harsha}

\begin{abstract}
In this paper, we provide a general framework for obtaining the formula for nested summation of powers of natural numbers. We define a special triangular array of numbers from which we can obtain the formula for nested summation of natural numbers at any particular power. Binomial coefficients play a key role in nesting. Our framework is very simple to understand.
\end{abstract}

\maketitle

\section{Introduction}

Formulas for the sum of powers of first $n$ natural numbers dates back to a long time ago. In his correspondence of 1636, Pierre de Fermat called the problem of finding formulas for sums of powers “what is perhaps the most beautiful problem of all arithmetic”\cite{pengelley2002bridge}. A general formula for the sum of powers of first $n$ natural numbers, $\sum\limits_{i=0}^{n} {i}^k$  was discovered using Bernoulli numbers and is called as Faulhaber's formula. Different authors extended and derived alternate proofs for Faulhaber's formula \cite{merca2015alternative}\cite{schumacher2016extended}. In this paper, we developed a formula for nested summation $\sum\limits^{(m)} n^k$, where $m$ stands for the number of times we apply summation.

Our framework is very simple to understand and can be used to generate the formula for nested summation involving the first $n$ natural numbers. The key part of the paper is the triangular array of numbers defined later.

We define and exemplify notation for nested summation in the second section along with some trivial notations that are used in this paper. We define a special triangular array of numbers (we call it Saras triangle) in the third section along with its formal definition. In the fourth section, we present a general formula and we provide few simple illustrations for intuitive understanding of the framework. We list out some basic lemmas in the fifth section.  In the sixth section, we provide a formula for the desired nested summation, which appeared in the fourth section, now along with proof. Finally, we end up with a conclusion.

\section{Notations Used}

We define the notation $\sum  n^k $ as $\sum  n^k = 
\sum\limits_{r=1}^{n} r^k = 1^k+2^k+3^k+\cdots+n^k$.  Throughout this paper, we denote set of whole numbers $\{0, 1, 2, 3, \cdots\}$ by $\mathbb{W}$ and set of natural numbers  $\{1, 2, 3, \cdots\}$ by $\mathbb{N}$. We are going to define the notation for nested summation $\sum\limits^{(m)} n^k$.

\begin{definition}
For $m \in \mathbb{W}, n\in \mathbb{N}, k \in \mathbb{W}$, we define $m-$ nested summation $\sum\limits^{(m)} n^k$ as

\[
\sum\limits^{(m)} n^k = 
     \begin{cases}
       n^k &\quad\text{if   } m = 0 \\\\
        \sum\limits^{(m-1)} 1^k+\sum\limits^{(m-1)} 2^k+\sum\limits^{(m-1)} 3^k+\cdots + \sum\limits^{(m-1)} n^k  &\quad\text{if } m \ge 1 \\ \\

     \end{cases}
\]

\end{definition}

We can denote $\sum\limits^{(m)} n^k$ as 

$$\sum\limits^{(m)} n^k = \underbrace{ \Big(  \sum \Big(  \sum  \Big(  \sum\cdots \Big( \sum}_{\text{m times}} n^k \Big) \Big) \Big)\Big) = \underbrace{\sum\sum\sum\cdots\sum}_{\text{m times}} n^k $$

Let us see the series for small values of $m$ for intuitive understanding

For $m=0$

$$\sum\limits^{(0)} n^k = n^k $$

For $m=1$

$$\sum\limits^{(1)} n^k =\sum n^k = 1^k +2^k +3^k +\cdots+n^k$$

For m =2: 
$$\sum\limits^{(2)} n^k =\sum\sum n^k =\sum 1^k + \sum 2^k + \sum 3^k +\cdots +\sum n^k $$

$$\implies \sum\limits^{(2)} n^k  = 1^k+(1^k +2^k) +(1^k +2^k +3^k) +\cdots +(1^k +2^k+3^k+\cdots +n^k)$$

For m =3: 

\[
\sum\limits^{(3)} n^k = 1^k+(1^k +(1^k + 2^k)) +(1^k + (1^k + 2^k) + (1^k + 2^k + 3^k)) +\cdots
\]
\[
+(1^k +(1^k+2^k)+ (1^k+2^k+3^k)+\cdots + (1^k + 2^k + 3^k + \cdots + n^k))
\]

Some properties related to nesting are discussed in section 5. We prove most of the statements in this paper using mathematical induction. We use $s$ for inducting variable. All other variables in the statement other than $s$ are considered to be arbitrary in the given scope unless stated.

\section{Saras Triangle}

Saras triangle is a special triangular array of numbers. We listed the first seven rows of Saras triangle. The numbering of rows starts with 0. In each row, the numbering of columns start with 0. $k^{th}$ row consists of $k+1$ numbers. The first number of any row is 1. Last number of $k^{th}$ row is $k!$. Other numbers are obtained by a simple linear combination of the two numbers above to it in the previous row. Let us see for example how the numbers in $4^{th}$ row $(k=4)$ are generated. First number is 1. $31 = 1(1) + 2(15), 180 = 2(15)+3(50), 390 = 3(50)+4(60), 360 = 4(60)+5(24), 120 = 5(24) = 120$.  All rows can be generated using the same procedure. This triangle has several interesting properties.

\begin{table}[h]
\centering
\begin{tabular}{>{$k=}l<{$\hspace{12pt}}*{13}{c}}
\hline
0 &&&&&&&1&&&&&&\\
\hline
1 &&&&&&1&&1&&&&&\\
\hline
2 &&&&&1&&3&&2&&&&\\
\hline
3 &&&&1&&7&&12&&6&&&\\
\hline
4 &&&1&&15&&50&&60&&24&&\\
\hline
5 &&1&&31&&180&&390&&360&&120&\\
\hline
6 &1&&63&&602&&2100&&3360&&2520&&720\\
\hline
\end{tabular}
\caption{Saras triangle with first seven rows}
\end{table}

In order to construct the formula for nested summation of power $k$ of natural numbers, only numbers from $k^{th}$ row of Saras triangle are used along with binomial coefficients.

Let us denote the entry in $k^{th}$ row  and the $r^{th}$ column in Saras triangle by $\mymu[k]{r}$ and can be defined as follows:

For $k \in \mathbb{W}$ and $r \in \mathbb{W}$
\begin{equation}
\mymu[k]{r}
 = 
     \begin{cases}
       1 &\quad\text{if   } r = 0 \\
        r  \Big(\mymu[k-1]{r-1}\Big)+ (r + 1) \Big(\mymu[k-1]{r} \Big)  &\quad\text{if }1 \le r \le k\\ 
        0 &\quad\text{if   } r > k  \\
     \end{cases}
\end{equation}

In the next section, we present and exemplify the formula for nested summation using the entries from Saras triangle and binomial coefficients.

\section{Formula and Few Illustrations }

For $m \in \mathbb{W}, n \in \mathbb{N}, k \in \mathbb{W}$, the formula for $m-$nested summation of power $k$ of first $n$ natural numbers is given by

$$\sum\limits^{(m)} n^k = \sum\limits_{i=0}^{k} \binom{n+m-1}{m+i}  \mymu[k]{i}  $$

Let us generate formulas for $\sum n^k$ for $k=1$ to 4 from Saras triangle using the above formula. 

For $m=1, k=1$

\begin{equation*}
\sum\limits^{(1)} n^1 = \sum n = \sum\limits_{i=0}^{1} \binom{n}{1+i} \Big(\mymu[1]{i} \Big) =  \binom{n}{1} (1) + \binom{n}{2} (1) = \dfrac{n(n+1)}{2}
\end{equation*}

For $m=1, k=2$

\begin{equation*}
\sum\limits^{(1)} n^2 = \sum n^2 = \sum\limits_{i=0}^{2} \binom{n}{1+i} \Big(\mymu[2]{i} \Big) =  \binom{n}{1} (1) + \binom{n}{2} (3) + \binom{n}{3} (2)
 = \dfrac{n(n+1)(2n+1)}{6}
\end{equation*}

For $m=1, k=3$

\begin{equation*}
\sum\limits^{(1)} n^3 = \sum n^3 = \sum\limits_{i=0}^{3} \binom{n}{1+i} \Big(\mymu[3]{i} \Big)=  \binom{n}{1} (1) + \binom{n}{2} (7) + \binom{n}{3} (12) + \binom{n}{4} (6)
 = \dfrac{n^2(n+1)^2}{4}
\end{equation*}

For $m=1, k=4$
\begin{equation*}
\sum\limits^{(1)} n^4 = \sum n^4 = \sum\limits_{i=0}^{4} \binom{n}{1+i} \Big(\mymu[4]{i} \Big) =  \binom{n}{1} (1) + \binom{n}{2} (15) + \binom{n}{3} (50) + \binom{n}{4} (60) +\binom{n}{5} (24) \end{equation*}

\begin{equation*}
= \dfrac{n(n+1)(2n+1)(3n^2+3n-1)}{30}
\end{equation*}

For $m=2$ and $k=5$

\begin{equation*}
\sum\limits^{(2)} n^5 = 1^5 + (1^5+2^5) +\cdots + (1^5+2^5+3^5+\cdots+n^5) 
\end{equation*}

\begin{equation*}
= \binom{n+1}{2} + 31 \binom{n+1}{3}  + 180\binom{n+1}{4} + 390 \binom{n+1}{5} + 360 \binom{n+1}{6} + 120 \binom{n+1}{7} 
\end{equation*}

We can observe that the entries of row $k$ is taken for nested summation of first $n$ natural numbers of power $k$. Based on nesting, the binomial coefficients changes. Thus Saras triangle gives the desired numbers for any power. The formula is concise and Saras triangle is sufficient enough to generate nested summation of any given power. Till now, we completely understood the framework for generating the formula for nested summation of powers of first $n$ natural numbers. In the next two sections, we prove some lemmas and prove the formula for nested summation using those lemmas.

\section{Lemmas}

We denote the $m$-nested summation of  $k^{th}$ power of first $n$ natural numbers by $\sum\limits^{(m)} n^k$. The following trivial properties we listed in this section will be useful while proving our theorem in section 6.

\begin{lemma} For $m \in \mathbb{W}, n \in \mathbb{N}, k \in \mathbb{W}$
\begin{equation*}
\sum\limits^{(m+1)} n^k = \sum\limits^{(m)} 1^k + \sum\limits^{(m)} 2^k  +\sum\limits^{(m)} 3^k +\cdots+\sum\limits^{(m)} n^k 
\end{equation*}
\end{lemma}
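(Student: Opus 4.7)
The plan is to invoke the recursive clause of Definition 2.1 directly, with the summation index shifted by one. Since $m \in \mathbb{W} = \{0, 1, 2, \ldots\}$, we have $m+1 \geq 1$, so the second branch of the piecewise definition is the relevant one when we evaluate $\sum^{(m+1)} n^k$. Substituting $m+1$ for $m$ in that recursive case turns the clause
$$\sum\limits^{(m)} n^k = \sum\limits^{(m-1)} 1^k + \sum\limits^{(m-1)} 2^k + \cdots + \sum\limits^{(m-1)} n^k$$
into precisely the identity being claimed.

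Consequently no induction, no expansion of binomial coefficients, and no appeal to the Saras triangle is needed; the lemma is essentially a notational restatement of the defining recursion. The only subtlety worth explicitly verifying is the case condition, namely that the upper index $m+1$ is at least $1$ so that we are entitled to use the recursive clause rather than the base case $m = 0$. This is immediate from $m \in \mathbb{W}$.

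In short, I would write a one-line proof: apply the definition of $\sum^{(m+1)} n^k$. There is no main obstacle to anticipate.
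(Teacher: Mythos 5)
Your proof is correct and matches the paper exactly: the paper's own proof of this lemma is the one-line observation that it follows directly from Definition 2.1, which is precisely your argument (with the added, harmless check that $m+1\ge 1$ justifies using the recursive clause).
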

 
\begin{proof}

It is directly obtained from definition 2.1.
\end{proof}

\begin{lemma} For $m \in \mathbb{W}, n \in \mathbb{N}-\{1\}, k \in \mathbb{W}$
\begin{equation*}
 \sum\limits^{(m)} n^{k+1} = \Big( n \sum\limits^{(m)} n^{k} \Big)  - \Big( m \sum \limits^{(m+1)} (n-1)^k \Big) 
\end{equation*}
\end{lemma}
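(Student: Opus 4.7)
The plan is to prove the lemma by induction on $m$, following the paper's convention of using $s$ as the inducting variable, with $n$ and $k$ taken to be arbitrary. The base case $m=0$ is essentially a tautology: Definition 2.1 gives $\sum^{(0)} n^{k+1} = n^{k+1} = n \cdot n^k = n \sum^{(0)} n^k$, and the correction term vanishes because its coefficient is $m=0$.

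For the inductive step, assume the identity at level $s$. First I would expand $\sum^{(s+1)} n^{k+1}$ using Lemma 5.1 as $\sum_{j=1}^n \sum^{(s)} j^{k+1}$, and then apply the induction hypothesis term-by-term. For $j\ge 2$ the hypothesis gives $\sum^{(s)} j^{k+1} = j\sum^{(s)} j^k - s\sum^{(s+1)}(j-1)^k$; the $j=1$ term has to be handled by hand but reduces consistently if one adopts the natural empty-sum convention $\sum^{(t)} 0^k = 0$ (since $\sum^{(s)} 1^{k+1}=1=1\cdot\sum^{(s)}1^k$). After this substitution one is left with
\begin{equation*}
\sum\limits^{(s+1)} n^{k+1} = \sum_{j=1}^n j\sum\limits^{(s)} j^k \;-\; s\sum_{j=1}^n \sum\limits^{(s+1)}(j-1)^k.
\end{equation*}

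The second sum is easy: re-indexing $j\mapsto j-1$ and using Lemma 5.1 immediately gives $\sum_{j=1}^n \sum^{(s+1)}(j-1)^k = \sum^{(s+2)}(n-1)^k$. The main work is the first sum, which I would attack by Abel summation. Writing $a_j = \sum^{(s)} j^k$ and $A_j = \sum_{i=1}^j a_i = \sum^{(s+1)} j^k$ (again by Lemma 5.1), one has $a_j = A_j - A_{j-1}$, so
\begin{equation*}
\sum_{j=1}^n j\,a_j = \sum_{j=1}^n j(A_j - A_{j-1}) = nA_n - \sum_{j=1}^{n-1} A_j = n\sum\limits^{(s+1)} n^k - \sum\limits^{(s+2)}(n-1)^k,
\end{equation*}
where the last equality uses Lemma 5.1 once more. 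Combining the two pieces yields $n\sum^{(s+1)} n^k - (s+1)\sum^{(s+2)}(n-1)^k$, which is exactly the statement at level $s+1$.

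The main obstacle is the Abel-summation step: one must recognize that the weight $j$ multiplying $\sum^{(s)} j^k$ is precisely what is needed to convert a single layer of nesting into a double layer via a telescoping argument. Once that identity is in hand, everything else is bookkeeping — the one subtlety being the $j=1$ boundary case, which is why the hypothesis $n\in\mathbb N\setminus\{1\}$ enters only at the outer layer (the induction hypothesis is applied at values $j\ge 2$, while $j=1$ is verified directly and absorbed into the empty-sum convention).
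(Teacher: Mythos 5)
Your proof is correct, but it is a genuinely different argument from the one in the paper. You induct on the nesting depth $m$: the base case $m=0$ is trivial, and in the step you expand $\sum^{(s+1)} n^{k+1}$ via Lemma 5.1, apply the hypothesis at $j\ge 2$ (handling $j=1$ directly, so the empty-sum convention is only cosmetic), and then evaluate $\sum_{j=1}^n j\sum^{(s)} j^k$ by Abel summation, which produces $n\sum^{(s+1)} n^k - \sum^{(s+2)}(n-1)^k$ and lets the extra $\sum^{(s+2)}(n-1)^k$ combine with the $-s\sum^{(s+2)}(n-1)^k$ term to give the coefficient $s+1$; all of these steps check out. The paper instead avoids induction entirely: it writes the nested sum in the closed form $\sum^{(m)} n^k = \sum_{r=1}^{n} \binom{n-r+m-1}{m-1} r^k$ (the coefficient of $r^k$ counted combinatorially) and then gets the lemma in one line from the weight identity $(n-r)\binom{n-r+m-1}{m-1} = m\binom{n-r+m-1}{m}$ followed by a re-indexing. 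The paper's route is much shorter but rests on the unproved coefficient formula (which is also awkward at $m=0$, where $\binom{\,\cdot\,}{-1}$ appears); your route is longer but self-contained, using only Definition 2.1 and Lemma 5.1, with the summation-by-parts step playing the role that the binomial weight identity plays in the paper.
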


\begin{proof}

We know that, the coefficient of $r^k$ in $\sum\limits^{(m)} n^{k}$ is $\binom{n-r+m-1}{m-1}$ for $1\le r \le n$ 

Now, 

\[
\sum\limits^{(m)} n^{k} = \sum \limits_{r=1}^{n} \binom{n-r+m-1}{m-1} r^k
\]

and

\[
\sum\limits^{(m)} n^{k+1} = \sum \limits_{r=1}^{n}  \binom{n-r+m-1}{m-1} r^{k+1} 
\]

\begin{equation*}
\implies  n \sum\limits^{(m)} n^{k} -  \sum\limits^{(m)} n^{k+1} =  \sum \limits_{r=1}^{n} \binom{n-r+m-1}{m-1} (n-r) r^k  =  \sum \limits_{r=1}^{n-1} \binom{n-r+m-1}{m} (m) r^k 
\end{equation*}

\[
\implies  n \sum\limits^{(m)} n^{k} -  \sum\limits^{(m)} n^{k+1} = m \sum \limits_{r=1}^{n-1} \binom{(n-1)-r+(m+1)-1}{(m+1)-1} r^k  =  m \sum\limits^{(m+1)} (n-1)^{k} 
\]

\[
\therefore \sum\limits^{(m)} n^{k+1} =    n \sum\limits^{(m)} n^{k} -  m \sum\limits^{(m+1)} (n-1)^{k} 
\]

\end{proof}

\begin{lemma} For $m \in \mathbb{W}, n \in \mathbb{N}, k \in \mathbb{W}$
\begin{equation*}
\sum\limits^{(m)} (n+1)^k = \Big(\sum\limits^{(m)} n^k + \sum\limits^{(m-1)} n^k  +\sum\limits^{(m-2)} n^k +\cdots+\sum\limits^{(1)} n^k \Big) +(n+1)^k 
\end{equation*}
\end{lemma}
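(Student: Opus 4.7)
The plan is to prove this by straightforward induction on $m$, using Lemma 5.1 (which expresses $\sum^{(m+1)}$ in terms of a sum of $\sum^{(m)}$'s) as the key recursive identity that drives the inductive step.

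For the base case, I would take $m=1$. Here the claim reduces to $\sum^{(1)}(n+1)^k = \sum^{(1)} n^k + (n+1)^k$, which is immediate from the very definition of $\sum^{(1)}$, since $\sum^{(1)}(n+1)^k = 1^k + 2^k + \cdots + n^k + (n+1)^k$. (If one wishes to include $m=0$ using the empty-sum convention on the right, it is also trivial: both sides equal $(n+1)^k$.)

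For the inductive step, assume the identity holds for some fixed $m \geq 1$; I will deduce it for $m+1$. Applying Lemma 5.1 with $n$ replaced by $n+1$ gives
\[
\sum\nolimits^{(m+1)}(n+1)^k \;=\; \sum\nolimits^{(m)} 1^k + \sum\nolimits^{(m)} 2^k + \cdots + \sum\nolimits^{(m)} n^k + \sum\nolimits^{(m)}(n+1)^k.
\]
Recognizing the first $n$ terms on the right as $\sum^{(m+1)} n^k$ (again by Lemma 5.1), this becomes
\[
\sum\nolimits^{(m+1)}(n+1)^k \;=\; \sum\nolimits^{(m+1)} n^k + \sum\nolimits^{(m)}(n+1)^k.
\]
Now I substitute the inductive hypothesis for the remaining term $\sum^{(m)}(n+1)^k$, namely $\sum^{(m)} n^k + \sum^{(m-1)} n^k + \cdots + \sum^{(1)} n^k + (n+1)^k$. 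Combining gives exactly the desired expression at level $m+1$, completing the induction.

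There is no real obstacle here; the argument is purely formal bookkeeping. The one thing to be careful about is the boundary: the trailing $(n+1)^k$ in the statement is precisely what prevents a cleaner telescoping down to $\sum^{(0)} n^k$, so I should make sure that in the inductive step I apply Lemma 5.1 before invoking the hypothesis, which is what produces the term $(n+1)^k$ at the tail end without needing to interpret $\sum^{(0)}(n+1)^k$ separately.
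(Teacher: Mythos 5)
Your proof is correct and is essentially the paper's argument: both rest on the single-step identity $\sum^{(m)}(n+1)^k = \sum^{(m)} n^k + \sum^{(m-1)}(n+1)^k$ obtained from Definition 2.1/Lemma 5.1, the paper then ``continues expanding till zeroth nesting'' informally while you package that same unrolling as a formal induction on $m$. No gap; your version just makes the iteration rigorous and handles the tail term at the base case $m=1$ instead of via $\sum^{(0)}(n+1)^k=(n+1)^k$.
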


\begin{proof}
\begin{equation*}
\sum\limits^{(m)} (n+1)^k = \sum\limits^{(m-1)} 1^k+\sum\limits^{(m-1)} 2^k+\sum\limits^{(m-1)} 3^k+\cdots + \sum\limits^{(m-1)} n^k  + \sum\limits^{(m-1)} (n+1)^k 
\end{equation*}

We obtained the following step from the above step by using definition 2.1

\begin{equation*}
\implies \sum\limits^{(m)} (n+1)^k = \sum\limits^{(m)} n^k + \sum\limits^{(m-1)} (n+1)^k  = \sum\limits^{(m)} n^k + \sum\limits^{(m-1)} n^k +  \sum\limits^{(m-2)} (n+1)^k
\end{equation*}
If we continue expanding till zeroth nesting, we get
\begin{equation*}
\sum\limits^{(m)} (n+1)^k =  \sum\limits^{(m)} n^k + \sum\limits^{(m-1)} n^k +  \sum\limits^{(m-2)} n^k +\cdots + \sum\limits^{(1)} n^k + \sum\limits^{(0)}(n+1)^k
\end{equation*}

\begin{equation*}
\therefore \sum\limits^{(m)} (n+1)^k =  \Big(\sum\limits^{(m)} n^k + \sum\limits^{(m-1)} n^k +  \sum\limits^{(m-2)} n^k +\cdots + \sum\limits^{(1)} n^k \Big)+ (n+1)^k
\end{equation*}
\end{proof}

\begin{lemma} For $k \in \mathbb{W}$ and $r \in \mathbb{W}$ 
\[
\mymu[k]{r} = 
     \begin{cases}
       1 &\quad\text{if   } r = 0 \\\\
        \sum \limits_{i=r-1}^{k-1} \binom{k}{i} \mymu[i]{r-1}  &\quad\text{if }1 \le r \le k \\ \\
        0 &\quad\text{if  }  r>k\\
     \end{cases}
\]
\end{lemma}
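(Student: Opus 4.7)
The plan is to prove the identity by induction on $k$. Let me write $g(k,r)$ for the right-hand side of the claim: $g(k,0)=1$; $g(k,r)=\sum_{i=r-1}^{k-1}\binom{k}{i}\mymu[i]{r-1}$ for $1\le r\le k$; and $g(k,r)=0$ for $r>k$. The goal is to show $g(k,r)=\mymu[k]{r}$ for every admissible $(k,r)$. The base case $k=0$ is immediate: both sides are $1$ at $r=0$ and $0$ elsewhere. For the inductive step, the cases $r=0$ and $r>k$ are trivial from the definitions, so the work is to verify $g(k,r)=\mymu[k]{r}$ for $1\le r\le k$, assuming the identity has been established for $k-1$.

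The case $r=1$ I would handle separately: then $g(k,1)=\sum_{i=0}^{k-1}\binom{k}{i}=2^k-1$, which matches $\mymu[k]{1}=1\cdot\mymu[k-1]{0}+2\,\mymu[k-1]{1}=1+2(2^{k-1}-1)$ after one invocation of the induction hypothesis on $\mymu[k-1]{1}$. For $r\ge 2$, the strategy is to massage $g(k,r)$ into the form $r\,\mymu[k-1]{r-1}+(r+1)\mymu[k-1]{r}$ dictated by the defining recursion. First I would apply Pascal's identity $\binom{k}{i}=\binom{k-1}{i}+\binom{k-1}{i-1}$ to split $g(k,r)$ into two sums. In the first sum, peeling off the top term $i=k-1$ leaves $g(k-1,r)+\mymu[k-1]{r-1}$ (the peeling is needed because $g(k-1,r)$ only runs up to $i=k-2$; one also has to check the edge case $r=k$, where $g(k-1,k)=0$ and the peeled term alone correctly supplies $(k-1)!$).

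In the second sum, after the reindexing $j=i-1$ I would have $\sum_{j=r-2}^{k-2}\binom{k-1}{j}\mymu[j+1]{r-1}$, to which the defining recursion $\mymu[j+1]{r-1}=(r-1)\mymu[j]{r-2}+r\,\mymu[j]{r-1}$ applies, since $r-1\ge 1$ and $j\ge r-2$. This splits as $(r-1)\sum_{j=r-2}^{k-2}\binom{k-1}{j}\mymu[j]{r-2}+r\sum_{j=r-2}^{k-2}\binom{k-1}{j}\mymu[j]{r-1}$; the first piece is exactly $(r-1)g(k-1,r-1)$, and the second equals $r\,g(k-1,r)$ once one observes that its boundary term at $j=r-2$ vanishes because $\mymu[r-2]{r-1}=0$. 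Combining everything gives $g(k,r)=(r+1)g(k-1,r)+(r-1)g(k-1,r-1)+\mymu[k-1]{r-1}$, and the induction hypothesis converts this into $(r+1)\mymu[k-1]{r}+r\,\mymu[k-1]{r-1}=\mymu[k]{r}$, closing the induction.

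The main obstacle will be the bookkeeping of boundary terms in the two sums: peeling the $i=k-1$ term cleanly in the first sum so that its remainder matches the shorter range of $g(k-1,r)$, and confirming that the $j=r-2$ term in the reindexed second sum drops out thanks to $\mymu[r-2]{r-1}=0$. Once these index shifts are tracked carefully, the rest is a routine linear combination, and the induction goes through.
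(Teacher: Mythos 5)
Your proof is correct and is essentially the paper's argument: both are inductions on the row index $k$ whose engine is the defining recursion $\mymu[k]{r}=r\,\mymu[k-1]{r-1}+(r+1)\,\mymu[k-1]{r}$ combined with Pascal's rule, with the same boundary terms ($\mymu[r-2]{r-1}=0$, the peeled top entry) appearing in both. The only difference is direction — you show the claimed sum satisfies the recursion, while the paper expands the recursion under the inductive hypothesis and reassembles it into the sum — so no substantive divergence.
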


\begin{proof}

We prove this statement using mathematical induction. We denote the given statement by $P(k,r)$. That is 

\begin{equation*}
P(k,r) : \mymu[k]{r} = 
     \begin{cases}
       1 &\quad\text{if   } r = 0 \\\\
        \sum \limits_{i=r-1}^{k-1} \binom{k}{i} \mymu[i]{r-1}  &\quad\text{if }1 \le r \le k \\ \\
        0 &\quad\text{if  }  r>k\\
     \end{cases}
\end{equation*}

\underline{Basis Case}: We prove $P(0, r), P(1,r), P(2,r), P(k,0), P(k,1)$

\[
P(0,r) : \mymu[0]{r} = 
     \begin{cases}
       1 &\quad\text{if   } r = 0 \\ 
       0 &\quad\text{if   } r > 0 \\
       \end{cases}
\]

\[
P(1,r) : \mymu[1]{r} = 
     \begin{cases}
       1 &\quad\text{if   } r = 0,1 \\ 
       0 &\quad\text{if   } r > 1 \\
       \end{cases}
\]

\[
P(2,r) : \mymu[2]{r} = 
     \begin{cases}
       1 &\quad\text{if   } r = 0 \\ 
       3 &\quad\text{if   } r = 1 \\
       2 &\quad\text{if   } r = 2 \\
       0 &\quad\text{if   } r > 2 \\
       \end{cases}
\]

\[
P(k,0) : \mymu[k]{0} =  1 
\]

\[
P(k,1) : \mymu[k]{1} = 
     \begin{cases}
       0 &\quad\text{if   } k = 0 \\ 
       2^k - 1 &\quad\text{if   } k >= 1 \\
       \end{cases}
\]

All the statements $P(0,r), P(1,r), P(2,r), P(k,0), P(k,1)$ are consistent with the definition (1). Hence true.

\underline{Induction Step:}  In this step we prove $ \Big[ P(s,r-1) \land P(s,r) \Big] \implies P(s+1, r)$ for all $s \ge 2$ and $2 \le r \le k$
\begin{equation*}
\mymu[s+1]{r} =  r \big (\mymu[s]{r-1} \big) +(r+1) \big( \mymu[s]{r} \big) = r \Bigg(\sum \limits_{i=r-2}^{s-1} \binom{s}{i} \mymu[i]{r-2} \Bigg)+(r+1) \Bigg(\sum \limits_{i=r-1}^{s-1} \binom{s}{i} \mymu[i]{r-1} \Bigg)
\end{equation*}

\begin{equation*}
\implies \mymu[s+1]{r} =   r \Bigg( \sum \limits_{i=r-1}^{s-1} \binom{s}{i} \mymu[i]{r-2} + \binom{s}{r-2} \mymu[r-2]{r-2}  \Bigg)+(r+1) \Bigg(\sum \limits_{i=r-1}^{s-1} \binom{s}{i} \mymu[i]{r-1} \Bigg)
\end{equation*}

\begin{equation*}
\implies \mymu[s+1]{r} =   r \Bigg(\binom{s}{r-2} \mymu[r-2]{r-2} \Bigg) + r \Bigg(\sum \limits_{i=r-1}^{s-1} \binom{s}{i} \mymu[i]{r-2} \Bigg)+(r+1) \Bigg(\sum \limits_{i=r-1}^{s-1} \binom{s}{i} \mymu[i]{r-1} \Bigg)
\end{equation*}

{\scriptsize 
\begin{equation*}
\implies \mymu[s+1]{r} =   r \Bigg(\binom{s}{r-2} \mymu[r-2]{r-2} \Bigg) + (r-1) \Bigg(\sum \limits_{i=r-1}^{s-1} \binom{s}{i} \mymu[i]{r-2} \Bigg)+(r) \Bigg(\sum \limits_{i=r-1}^{s-1} \binom{s}{i} \mymu[i]{r-1} \Bigg) +  \Bigg(\sum \limits_{i=r-1}^{s-1} \binom{s}{i} \mymu[i]{r-2} \Bigg)+ \Bigg(\sum \limits_{i=r-1}^{s-1} \binom{s}{i} \mymu[i]{r-1} \Bigg)
\end{equation*}

}

\begin{equation*}
\implies \mymu[s+1]{r} =   r \Bigg(\binom{s}{r-2} \mymu[r-2]{r-2} \Bigg) + \Bigg(\sum \limits_{i=r-1}^{s-1} \binom{s}{i} \mymu[i+1]{r-1} \Bigg) +  \Bigg(\sum \limits_{i=r-1}^{s-1} \binom{s}{i} \mymu[i]{r-2} \Bigg)+ \Bigg(\sum \limits_{i=r-1}^{s-1} \binom{s}{i} \mymu[i]{r-1} \Bigg)
\end{equation*}

\begin{equation*}
\implies \mymu[s+1]{r} =   r \Bigg(\binom{s}{r-2} \mymu[r-2]{r-2} \Bigg) + \Bigg(\sum \limits_{i=r}^{s} \binom{s}{i-1} \mymu[i]{r-1} \Bigg) +  \Bigg(\sum \limits_{i=r-1}^{s-1} \binom{s}{i} \mymu[i]{r-2} \Bigg)+ \Bigg(\sum \limits_{i=r-1}^{s-1} \binom{s}{i} \mymu[i]{r-1} \Bigg)
\end{equation*}

\begin{equation*}
\implies \mymu[s+1]{r} =   r \Bigg(\binom{s}{r-2} \mymu[r-2]{r-2} \Bigg) +  \Bigg(\sum \limits_{i=r}^{s-1} \binom{s}{i-1} \mymu[i]{r-1} \Bigg) + \Bigg(\binom{s}{s-1} \mymu[s]{r-1} \Bigg)+ 
\end{equation*}

\begin{equation*}
\Bigg(\sum \limits_{i=r-1}^{s-1} \binom{s}{i} \mymu[i]{r-2} \Bigg)+ \Bigg(\sum \limits_{i=r}^{s-1} \binom{s}{i} \mymu[i]{r-1} \Bigg) + \Bigg( \binom{s}{r-1} \mymu[r-1]{r-1} \Bigg)
\end{equation*}

\begin{equation*}
\implies \mymu[s+1]{r} =   r \Bigg(\binom{s}{r-2} \mymu[r-2]{r-2} \Bigg) +  \Bigg(\sum \limits_{i=r}^{s-1} \binom{s+1}{i} \mymu[i]{r-1} \Bigg) + \Bigg(\binom{s}{s-1} \mymu[s]{r-1} \Bigg)+ 
\end{equation*}

\begin{equation*}
\Bigg(\sum \limits_{i=r-1}^{s-1} \binom{s}{i} \mymu[i]{r-2} \Bigg) + \Bigg( \binom{s}{r-1} \mymu[r-1]{r-1} \Bigg)
\end{equation*}

\begin{equation*}
\implies \mymu[s+1]{r} =   (r-1) \Bigg(\binom{s}{r-2} \mymu[r-2]{r-2} \Bigg) +  \Bigg(\sum \limits_{i=r}^{s-1} \binom{s+1}{i} \mymu[i]{r-1} \Bigg) + \Bigg(\binom{s}{s-1} \mymu[s]{r-1} \Bigg)+
\end{equation*}

\begin{equation*}
 \Bigg(\binom{s}{r-2} \mymu[r-2]{r-2} \Bigg) + \Bigg(\sum \limits_{i=r-1}^{s-1} \binom{s}{i} \mymu[i]{r-2} \Bigg) + \Bigg( \binom{s}{r-1} \mymu[r-1]{r-1} \Bigg)
\end{equation*}

{\scriptsize 
\begin{equation*}
\implies \mymu[s+1]{r} =   (r-1) \Bigg(\binom{s}{r-2} \mymu[r-2]{r-2} \Bigg) +  \Bigg(\sum \limits_{i=r}^{s-1} \binom{s+1}{i} \mymu[i]{r-1} \Bigg) + \Bigg(\binom{s}{s-1} \mymu[s]{r-1} \Bigg) + \Bigg(\sum \limits_{i=r-2}^{s-1} \binom{s}{i} \mymu[i]{r-2} \Bigg) + \Bigg( \binom{s}{r-1} \mymu[r-1]{r-1} \Bigg)
\end{equation*}
}

\begin{equation*}
\implies \mymu[s+1]{r} =   \Bigg(\binom{s}{r-2} \mymu[r-1]{r-1} \Bigg) +  \Bigg(\sum \limits_{i=r}^{s-1} \binom{s+1}{i} \mymu[i]{r-1} \Bigg) + \Bigg(\binom{s}{1} \mymu[s]{r-1} \Bigg) + \Big( \mymu[s]{r-1} \Big) + \Bigg( \binom{s}{r-1} \mymu[r-1]{r-1} \Bigg)
\end{equation*}

\begin{equation*}
\implies \mymu[s+1]{r} =   \Bigg(\binom{s}{r-2} \mymu[r-1]{r-1} \Bigg) +  \Bigg(\sum \limits_{i=r}^{s-1} \binom{s+1}{i} \mymu[i]{r-1} \Bigg) + \Bigg(\binom{s+1}{1} \mymu[s]{r-1} \Bigg)  + \Bigg( \binom{s}{r-1} \mymu[r-1]{r-1} \Bigg)
\end{equation*}

\begin{equation*}
\implies \mymu[s+1]{r} =   \Bigg(\binom{s}{r-2} \mymu[r-1]{r-1} \Bigg) +  \Bigg(\sum \limits_{i=r}^{s-1} \binom{s+1}{i} \mymu[i]{r-1} \Bigg) + \Bigg(\binom{s+1}{s} \mymu[s]{r-1} \Bigg)  + \Bigg( \binom{s}{r-1} \mymu[r-1]{r-1} \Bigg)
\end{equation*}

\begin{equation*}
\implies \mymu[s+1]{r} =   \Bigg(\binom{s}{r-2} \mymu[r-1]{r-1} \Bigg) +  \Bigg(\sum \limits_{i=r}^{s} \binom{s+1}{i} \mymu[i]{r-1} \Bigg) + \Bigg( \binom{s}{r-1} \mymu[r-1]{r-1} \Bigg)
\end{equation*}

\begin{equation*}
\implies \mymu[s+1]{r} =   \Bigg(\sum \limits_{i=r}^{s} \binom{s+1}{i} \mymu[i]{r-1} \Bigg) + \Bigg( \binom{s+1}{r-1} \mymu[r-1]{r-1} \Bigg)
\end{equation*}

\begin{equation*}
\implies \mymu[s+1]{r} =   \sum \limits_{i=r-1}^{s} \binom{s+1}{i} \mymu[i]{r-1} 
\end{equation*}

\begin{equation*}
\implies \mymu[s+1]{r} =  \sum \limits_{i=r-1}^{(s+1)-1} \binom{s+1}{i} \mymu[i]{r-1} 
\end{equation*}

Hence the statement is proved.
\end{proof}

\begin{lemma} For $n \in  \mathbb{W}, k \in \mathbb{W}$
\begin{equation*}
(n+1)^k = \sum\limits_{i=0}^{k} \binom{n}{i} \Big( \mymu[k]{i} \Big)
\end{equation*}
\end{lemma}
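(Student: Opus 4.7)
The plan is to prove the identity by induction on $k$, with $n$ held arbitrary throughout. The two ingredients I will need are the defining recursion for the triangle entries, namely $\mymu[k+1]{i} = i\,\mymu[k]{i-1} + (i+1)\,\mymu[k]{i}$ (using the conventions $\mymu[k]{-1} = 0$ and $\mymu[k]{k+1} = 0$ so that this formula holds at both boundaries), together with the standard binomial identity $(n+1)\binom{n}{i} = (i+1)\binom{n+1}{i+1}$ and Pascal's rule $\binom{n+1}{i+1} = \binom{n}{i} + \binom{n}{i+1}$.

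The base case $k=0$ is immediate since $(n+1)^0 = 1 = \binom{n}{0}\mymu[0]{0}$. For the inductive step, I will take the hypothesis $(n+1)^k = \sum_{i=0}^{k} \binom{n}{i} \mymu[k]{i}$ and multiply both sides by $n+1$. Rewriting each coefficient as $(n+1)\binom{n}{i} = (i+1)\binom{n}{i} + (i+1)\binom{n}{i+1}$ via the two identities above splits the right-hand side into two sums. Re-indexing the second sum by $j = i+1$ converts its generic term $(i+1)\binom{n}{i+1}\mymu[k]{i}$ into $j\binom{n}{j}\mymu[k]{j-1}$, after which both sums can be combined over the common range $0 \le j \le k+1$ to produce $\sum_{j=0}^{k+1} \binom{n}{j} \bigl[ j\,\mymu[k]{j-1} + (j+1)\,\mymu[k]{j} \bigr]$. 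By the recursion, the bracketed quantity is exactly $\mymu[k+1]{j}$, which finishes the step.

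The main obstacle I anticipate is not the algebra but the careful bookkeeping at the two endpoints of the combined sum, where a naive application of the recursion could mis-account for the top and bottom entries. At $j = 0$ the bracket collapses to $(0{+}1)\,\mymu[k]{0} = 1 = \mymu[k+1]{0}$ provided one interprets $\mymu[k]{-1}$ as $0$; at $j = k+1$ it collapses to $(k{+}1)\,\mymu[k]{k}$, which equals $\mymu[k+1]{k+1}$ provided one interprets $\mymu[k]{k+1}$ as $0$. Once these boundary cases are checked against the definition in (3.1), the remainder of the argument is purely mechanical manipulation of binomial coefficients, so no deeper idea is required beyond the two identities identified above.
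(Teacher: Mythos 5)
Your proof is correct, and its computational core is the same as the paper's ``Step 1'': multiply the inductive hypothesis by $n+1$, rewrite $(n+1)\binom{n}{i}=(i+1)\binom{n+1}{i+1}=(i+1)\binom{n}{i}+(i+1)\binom{n}{i+1}$, re-index one sum, and collapse the bracket $j\,\mymu[k]{j-1}+(j+1)\,\mymu[k]{j}$ into $\mymu[k+1]{j}$ via the triangle's recursion. Where you genuinely differ is the induction scheme. You note that the base case $k=0$ holds for every $n$ simultaneously (both sides equal $1$), so a single induction on $k$ with $n$ arbitrary already yields the full statement. The paper instead anchors only at the single point $P(0,0)$ and therefore needs a second inductive step in the $n$-direction (its ``Step 2''), which expands $(s+2)^k=\sum_{i}\binom{k}{i}(s+1)^i$ and invokes Lemma 5.4 on the triangle entries. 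Your route renders that entire second step, and the dependence on Lemma 5.4, unnecessary, which is a real simplification; nothing is lost, because the boundary conventions you flag are consistent with the paper's definition of the triangle: at $j=0$ the problematic term carries coefficient $j=0$ and the bracket reduces to $\mymu[k]{0}=1=\mymu[k+1]{0}$, and at $j=k+1$ the definition already gives $\mymu[k]{k+1}=0$, hence $\mymu[k+1]{k+1}=(k+1)\,\mymu[k]{k}$, exactly as your argument requires.
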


\begin{proof}
We prove this lemma using mathematical induction. Let us denote the statement by $P(n,k)$. That is 

\[
P(n,k) : (n+1)^k = \sum\limits_{i=0}^{k} \binom{n}{i}  \Big( \mymu[k]{i} \Big)
\]

\underline{Basis step:} We prove for $P(0,0)$

\[
LHS = (n+1)^k = (0+1)^0 = 1
\]
\[
RHS =  \sum\limits_{i=0}^{k} \binom{n}{i}  \Big( \mymu[k]{i} \Big) = \sum\limits_{i=0}^{0} \binom{0}{i} \mymu[0]{i} = \binom{0}{0} \mymu[0]{0} = 1
\]

Hence $P(0,0)$ is true.

\underline{Induction steps:} We divide this step into two substeps. 

\underline{Step 1} : First step is that we assume $P(n,k)$  is true for $k=s$. That is 

\[
 (n+1)^s = \sum\limits_{i=0}^{s} \binom{n}{i}  \Big( \mymu[s]{i} \Big)
\]
Now we prove $ P(n,s+1)$. That is $P(n,s) \implies P(n,s+1)$

\[
(n+1)^{s+1} = (n+1)(n+1)^s = (n+1) \Bigg( \sum\limits_{i=0}^{s} \binom{n}{i} \mymu[s]{i} \Bigg) 
\]

\[
= \sum\limits_{i=0}^{s} \Bigg( \Big( \dfrac{n+1}{i+1} \Big) \binom{n}{i} (i+1) \big( \mymu[s]{i} \big) \Bigg) = \sum\limits_{i=0}^{s}  \Bigg( (i+1) \binom{n+1}{i+1} \mymu[s]{i} \Bigg)
\]

\[
\implies (n+1)^{s+1} = \sum\limits_{i=0}^{s} \Bigg( \binom{n}{i} (i+1) (\mymu[s]{i})  + \binom{n}{i+1} (i+1) \big(\mymu[s]{i} \big) \Bigg)
\]

\[
=  \sum\limits_{i=0}^{s}  \Bigg( \binom{n}{i} (i+1) \big(\mymu[s]{i} \big) \Bigg) +  \sum\limits_{i=1}^{s+1}  \Bigg( \binom{n}{i} (i) \big(\mymu[s]{i-1} \big) \Bigg)
\]

\[
\implies (n+1)^{s+1} = \binom{n}{0} (1) \big( \mymu[s]{0} \big) + \Bigg( \sum\limits_{i=1}^{s}  \binom{n}{i} (i+1) \big(\mymu[s]{i} \big) \Bigg) + \Bigg( \sum\limits_{i=1}^{s}  \binom{n}{i} (i) \big( \mymu[s]{i-1} \big) \Bigg)+ \binom{n}{s+1}(s+1)  \big( \mymu[s]{s} \big)
\]

\[
\implies (n+1)^{s+1} = \binom{n}{0}  \mymu[s]{0}  + \Bigg( \sum\limits_{i=1}^{s}  \binom{n}{i} \Big( (i) \big( \mymu[s]{i-1} \big) + (i+1) \big( \mymu[s]{i} \big) \Big) \Bigg) + \binom{n}{s+1} (s+1) \big( \mymu[s]{s} \big)
\]

\[
\implies (n+1)^{s+1} = \binom{n}{0} \mymu[s]{0} + \Bigg( \sum\limits_{i=1}^{s}   \binom{n}{i} \mymu[s+1]{i} \Bigg) + \binom{n}{s+1} (s+1) \big( \mymu[s]{s} \big)
\]

\[
\implies (n+1)^{s+1} = \binom{n}{0} \mymu[s+1]{0} + \Bigg( \sum\limits_{i=1}^{s}   \binom{n}{i} \mymu[s+1]{i} \Bigg) + \binom{n}{s+1} \mymu[s+1]{s+1}
\]

\[
\implies (n+1)^{s+1} = \sum\limits_{i=0}^{s+1}   \binom{n}{i} \mymu[s+1]{i}
\]

\underline{Step 2} : We prove $\Big[P(s,0) \land P(s,1) \land \cdots \land P(s,k) \Big] \implies P(s+1,k)$

\[
  (s+2)^{k} =  \sum \limits_{i=0}^{k} \binom{k}{i} (s+1)^i = \sum \limits_{i=0}^{k} \Bigg( \binom{k}{i}  \sum\limits_{j=0}^{i} \binom{s}{j} \mymu[i]{j} \Bigg) =   \Bigg(\sum \limits_{i=0}^{k-1} \binom{s}{i} \Big( \sum\limits_{j=i}^{k-1} \binom{k}{j} \mymu[j]{i} \Big) \Bigg) + \binom{k}{k} \Bigg( \sum\limits_{j=0}^{k} \binom{s}{j} \mymu[k]{j} \Bigg) 
\]

The next step is obtained by applying Lemma 5.4 on previous step
\[
\implies  (s+2)^{k} =  \Bigg( \sum\limits_{i=0}^{k} \binom{s}{i} \mymu[k]{i} \Bigg)  + \Bigg( \sum \limits_{i=0}^{k-1} \binom{s}{i} \mymu[k]{i+1} \Bigg) =    \binom{s}{0} \mymu[k]{0} + \Bigg( \sum\limits_{i=0}^{k-1} \binom{s}{i+1} \mymu[k]{i+1} \Bigg)  + \Bigg( \sum \limits_{i=0}^{k-1} \binom{s}{i} \mymu[k]{i+1} \Bigg)
\]

\[
\implies  (s+2)^{k} =  \binom{s}{0} \mymu[k]{0} + \sum\limits_{i=0}^{k-1} \binom{s+1}{i+1} \mymu[k]{i+1}  =  \binom{s+1}{0} \mymu[k+1]{0} +  \sum\limits_{i=0}^{k-1} \binom{s+1}{i+1} \mymu[k]{i+1}  = \sum\limits_{i=0}^{k} \binom{s+1}{i} \mymu[k]{i} 
\]

\[
\implies (s+2)^{k} = \sum\limits_{i=0}^{k} \binom{s+1}{i} \mymu[k]{i}
\]

Hence the statement is proved.

\end{proof}

\section{Formula}
For $m \in \mathbb{W}, n \in \mathbb{N}, k \in \mathbb{W}$ , the $m-$nested summation of power $k$ of first $n$ natural numbers is given by

$$\sum\limits^{(m)} n^k = \sum\limits_{i=0}^{k} \binom{n+m-1}{m+i} 
\mymu[k]{i} $$

\begin{proof}

We prove the above statement by using mathematical induction. We denote the statement as $P(m,n,k)$. That is

$$P(m,n,k) : \sum\limits^{(m)} n^k = \sum\limits_{i=0}^{k} \binom{n+m-1}{m+i} \mymu[k]{i}$$

\textbf{Basis case} : 

In this case we prove $P(0,1,0)$ is true

$$LHS = \sum\limits^{(0)} 1^0 = 1^0 = 1$$

$$RHS = \sum\limits_{i=0}^{0} \binom{1+0-1}{0+i} 
\mymu[0]{i} = \sum\limits_{i=0}^{0} \binom{0}{0+i} 
\mymu[0]{i} = \binom{0}{0} \mymu[0]{0} = 1$$

Hence $P(0,1,0)$ is proved.

\textbf{Inductive step} : 

We divide this step in to the following 3 parts:

$$\Big[P(s,1,k) \land P(s,2,k) \land P(s,3,k)  \land \cdots  \land  P(s,n,k)\Big] \implies P(s+1,n,k)$$

$$\Big[P(m,n,s) \land P(m+1,n-1,s)\Big] \implies  P(m,n,s+1)$$

$$\Big[P(m,s,k) \land P(m-1,s,k) \land P(m-2,s,k) \land \cdots \land P(0,s,k)\Big] \implies P(m,s+1,k)$$

We prove each of the above implication below.

\underline{Part 1:} $\Big[P(s,1,k) \land P(s,2,k) \land P(s,3,k) \land \cdots \land P(s,n,k)\Big] \implies  P(s+1,n,k)$

Assume that the statement is true for $m=s$. That is

$$\sum\limits^{(s)} n^k = \sum\limits_{i=0}^{k} \binom{n+s-1}{s+i} \LARGE
\mymu[k]{i} $$

Now, we have to prove for $m = s+1$

From Lemma 5.1, we know that
\[
\sum\limits^{(s+1)} n^k = \sum \Big(\sum\limits^{(s)} n^k\Big) =  \sum\limits^{(s)} 1^k + \sum\limits^{(s)} 2^k + \sum\limits^{(s)} 3^k+ \cdot + \sum\limits^{(s)} n^k \]                                                                                                                                                                                                                                                                                                                                                                                                                                                                                                                                                                                                                                                                                                                                                                                                                                                                                                                                                                                                                                                                                                                                                                                                                                                                                                                                                                                                                                                                                                                                                                                                                                                                                                                                     

{\scriptsize
\[ 
\implies \sum\limits^{(s+1)} n^k = \sum\limits_{i=0}^{k} \binom{1+s-1}{s+i} \LARGE \mymu[k]{i} + \sum\limits_{i=0}^{k} \binom{2+s-1}{s+i} \LARGE \mymu[k]{i} + \sum\limits_{i=0}^{k} \binom{3+s-1}{s+i} \LARGE \mymu[k]{i} +\cdots + \sum\limits_{i=0}^{k} \binom{n+s-1}{s+i} \LARGE \mymu[k]{i}
\]
}

\[ 
\implies \sum\limits^{(s+1)} n^k =  \sum\limits_{i=0}^{k} \binom{s}{s+i} \LARGE \mymu[k]{i} + \sum\limits_{i=0}^{k} \binom{s+1}{s+i} \LARGE \mymu[k]{i} + \sum\limits_{i=0}^{k} \binom{s+2}{s+i} \LARGE \mymu[k]{i} +\cdots + \sum\limits_{i=0}^{k} \binom{n+s-1}{s+i} \LARGE \mymu[k]{i}
\]

\[
\implies \sum\limits^{(s+1)} n^k =\mymu[k]{0} \sum\limits_{r=0}^{n-1} \binom{s+r}{s} + \mymu[k]{1} \sum\limits_{r=0}^{n-1} \binom{s+r}{s+1} + \mymu[k]{2} \sum\limits_{r=0}^{n-1} \binom{s+r}{s+2} +\cdots+\mymu[k]{k} \sum\limits_{r=0}^{n-1} \binom{s+r}{s+k}
\]

\[
\implies \sum\limits^{(s+1)} n^k =\mymu[k]{0} \binom{s+n}{s+1} + \mymu[k]{1} \binom{s+n}{s+2}  + \mymu[k]{2} \binom{s+n}{s+3} 
+\cdots+\mymu[k]{k} \binom{s+n}{s+k+1} 
\]

\[
=\sum\limits_{i=0}^{k}  \mymu[k]{i} \binom{s+n}{s+1+i}
\]

\[
\implies \sum\limits^{(s+1)} n^k  =\sum\limits_{i=0}^{k}  \mymu[k]{i} \binom{n+(s+1)-1}{(s+1)+i} 
\]

\underline{Part 2:}$\Big[P(m,n,s) \land P(m+1,n-1,s)\Big] \implies P(m,n,s+1)$

Assume that the statement is true for $k=s$. That is

$$\sum\limits^{(m)} n^s = \sum\limits_{i=0}^{s} \binom{n+m-1}{m+i} 
\mymu[s]{i} $$

Now, we have to prove for $k = s+1$

From Lemma 5.2, we know that

\[
\sum\limits^{(m)} n^{s+1} = n \Big(\sum\limits^{(m)} n^{s}  \Big) - m \Big( \sum\limits^{(m+1)} (n-1)^{s} \Big) 
\]

\[
 \implies \sum\limits^{(m)} n^{s+1} = n \Big(\sum\limits_{i=0}^{s} \binom{n+m-1}{m+i} 
\mymu[s]{i} \Big) - m \Big(\sum\limits_{i=0}^{s} \binom{n+m-1}{m+1+i} 
\mymu[s]{i} \Big) 
\]

\[
 \implies \sum\limits^{(m)} n^{s+1}=  \Big(\sum\limits_{i=0}^{s} n\binom{n+m-1}{m+i} 
\mymu[s]{i} \Big) -  \Big(\sum\limits_{i=0}^{s} m\binom{n+m-1}{m+1+i} 
\mymu[s]{i} \Big) 
\]

\[
\implies \sum\limits^{(m)} n^{s+1} =  \sum\limits_{i=0}^{s}   \Big( \mymu[s]{i} \Big) \Bigg( n \binom{n+m-1}{m+i} - m \binom{n+m-1}{m+1+i} \Bigg) 
 \]

\[
\implies \sum\limits^{(m)} n^{s+1} =  \sum\limits_{i=0}^{s} \Big( \mymu[s]{i} \Big)  (i+1) \binom{n+m}{m+i+1}  
 \]

\[
\implies \sum\limits^{(m)} n^{s+1} =  \sum\limits_{i=0}^{s} (i+1) \Big( \mymu[s]{i} \Big) \binom{n+m-1}{m+i} + \sum\limits_{i=0}^{s} (i+1) \Big( \mymu[s]{i} \Big) \binom{n+m-1}{m+i+1} 
 \]

\[
\implies \sum\limits^{(m)} n^{s+1} =  \mymu[s]{0}  \binom{n+m-1}{m} + \sum\limits_{i=1}^{s} (i+1)  \Big( \mymu[s]{i} \Big) \binom{n+m-1}{m+i} 
\]

\[
+ \sum\limits_{i=0}^{s-1} (i+1)  \Big( \mymu[s]{i} \Big)  \binom{n+m-1}{m+i+1} + (s+1) \Big( \mymu[s]{s} \Big) \binom{n+m-1}{m+s+1}
 \]

\[
\implies \sum\limits^{(m)} n^{s+1} =  \mymu[s+1]{0}  \binom{n+m-1}{m} + \sum\limits_{i=1}^{s} (i+1)  \Big(\mymu[s]{i}\Big)  \binom{n+m-1}{m+i} 
\]

\[
+ \sum\limits_{i=1}^{s} i  \Big( \mymu[s]{i-1} \Big) \binom{n+m-1}{m+i} + \Big( \mymu[s+1]{s+1} \Big)  \binom{n+m-1}{m+s+1}
 \]

\[
\implies \sum\limits^{(m)} n^{s+1} =  \mymu[s+1]{0}  \binom{n+m-1}{m} + \sum\limits_{i=1}^{s} \Big( i \big( \mymu[s]{i-1} \big) + (i+1) \big( \mymu[s]{i} \big) \Big)  \binom{n+m-1}{m+i} + \mymu[s+1]{s+1}  \binom{n+m-1}{m+s+1}
 \]

\[
\implies \sum\limits^{(m)} n^{s+1} =  \mymu[s+1]{0}  \binom{n+m-1}{m} + \sum\limits_{i=1}^{s} \mymu[s+1]{i}   \binom{n+m-1}{m+i} + \mymu[s+1]{s+1}  \binom{n+m-1}{m+s+1}
 \]

\[
\implies \sum\limits^{(m)} n^{s+1} = \sum\limits_{i=0}^{s+1}   \binom{n+m-1}{m+i} \mymu[s+1]{i}  
 \]

\underline{Part 3:} $\Big[P(m,s,k) \land P(m-1,s,k) \land P(m-2,s,k) \land \cdots \land P(0,s,k)\Big]  \implies  P(m,s+1,k)$

Assume that the statement is true for $n=s$. That is

\[
\sum\limits^{(m)} s^k = \sum\limits_{i=0}^{k} \binom{s+m-1}{m+i} 
\mymu[k]{i}
\]

Now, we have to prove for $n = s+1$

From Lemma 5.3, we know that

\[
\sum\limits^{(m)} (s+1)^k = \Bigg(\sum\limits^{(m)} s^k + \sum\limits^{(m-1)} s^k + \sum\limits^{(m-2)} s^k +  \cdots + \sum\limits^{(1)} s^k\Bigg) +  (s+1)^k
\]

After using lemma 5.5 on the above step, we get

{\scriptsize

\[
\implies \sum\limits^{(m)} (s+1)^k  = \Bigg( \sum\limits_{i=0}^{k} \binom{s+m-1}{m+i} 
\mymu[k]{i} + \sum\limits_{i=0}^{k} \binom{s+m-2}{m+i-1} 
\mymu[k]{i} + \sum\limits_{i=0}^{k} \binom{s+m-3}{m+i-2} 
\mymu[k]{i} +\cdots + \sum\limits_{i=0}^{k} \binom{s}{1+i} 
\mymu[k]{i} \Bigg) +  \sum\limits_{i=0}^{k} \binom{s}{i} 
\mymu[k]{i}
\]

}

\[
\implies \sum\limits^{(m)} (s+1)^k = \sum\limits_{i=0}^{k} \Bigg(  \binom{s+m-1}{m+i} + \binom{s+m-2}{m+i-1}  + \binom{s+m-3}{m+i-2}  + \cdots + \binom{s}{1+i} + \binom{s}{i}   \Bigg) \mymu[k]{i} 
\]

\[
\implies \sum\limits^{(m)} (s+1)^k = \sum\limits_{i=0}^{k}   \binom{s+m}{m+i}   \mymu[k]{i} 
\]

\[
\implies \sum\limits^{(m)} (s+1)^k = \sum\limits_{i=0}^{k}   \binom{(s+1)+m-1}{m+i}   \mymu[k]{i} 
\]

Finally, we proved the formula of $m-$nested summation of first $n$ natural numbers with power $k$.

\textbf{Note :} An alternative way to generate the entries from Saras triangle is by using the following formula:

\end{proof}

For $k \in \mathbb{W}$ and $r \in \mathbb{W}$
$$\mymu[k]{r} = \sum\limits_{i=0}^{r} \binom{r}{i} (-1)^i (r+1-i)^k$$

\begin{proof}
This statement can also be proved by using mathematical induction.

Let $P(n,r)$ be the given statement. That is

\[
P(n,r): \mymu[k]{r} = \sum\limits_{i=0}^{r} \binom{r}{i} (-1)^i (r+1-i)^k
\]

\underline{Base case :} We prove $P(0,r)$ as basis case

\[
 P(0,r) = \sum\limits_{i=0}^{r} \binom{r}{i} (-1)^i (r+1-i)^0 = \sum\limits_{i=0}^{r} \binom{r}{i} (-1)^i =      \begin{cases}
       1 &\quad\text{if   } r = 0 \\ 
       0 &\quad\text{if   } r > 0 \\
       \end{cases}
\]
\underline{Induction step}:

In this step we prove the statement $\Big[ P(s, r-1) \land P(s,r)\Big] \implies P(s+1, r)$

\[
\mymu[s+1]{r} = r (\mymu[s]{r-1}) +(r+1) (\mymu[s]{r}) = r \sum\limits_{i=0}^{r-1} \binom{r-1}{i} (-1)^i (r-i)^s + (r+1) \sum\limits_{i=0}^{r} \binom{r}{i} (-1)^i (r+1-i)^s
\]

\[
\implies \mymu[s+1]{r} = \sum\limits_{i=0}^{r-1} r \binom{r-1}{i} (-1)^i (r-i)^s +  \sum\limits_{i=0}^{r} (r+1) \binom{r}{i} (-1)^i (r+1-i)^s
\]

\[
\implies \mymu[s+1]{r} = \sum\limits_{i=1}^{r} r \binom{r-1}{i-1} (-1)^{(i-1)} (r+1-i)^s +  \sum\limits_{i=0}^{r} (r+1) \binom{r}{i} (-1)^i (r+1-i)^s
\]

\[
\implies \mymu[s+1]{r} =   \sum\limits_{i=0}^{r} (r+1) \binom{r}{i} (-1)^i (r+1-i)^s - \sum\limits_{i=1}^{r} r \binom{r-1}{i-1} (-1)^i (r+1-i)^s 
\]

\[
\implies \mymu[s+1]{r} =   \sum\limits_{i=0}^{r} (r+1) \binom{r}{i} (-1)^i (r+1-i)^s - \sum\limits_{i=1}^{r} i \binom{r}{i} (-1)^i (r+1-i)^s 
\]

\[
\implies \mymu[s+1]{r} =   \sum\limits_{i=0}^{r} (r+1) \binom{r}{i} (-1)^i (r+1-i)^s - \sum\limits_{i=0}^{r} i \binom{r}{i} (-1)^i (r+1-i)^s 
\]

\[
\implies \mymu[s+1]{r} =   \sum\limits_{i=0}^{r} \binom{r}{i} (-1)^i (r+1-i)^{(s+1)}
\]

Hence the statement is proved.
\end{proof}

\section*{Conclusion}

We presented a general framework for generating formulas for nested summation by introducing Saras triangle.

\medskip

\noindent MSC2010: 40B05, 40B99

\end{document}